\title{Limits of the trivial bundle on a curve}
\author{\vspace{0cm} Arnaud Beauville}
\institution{Universit\'e C\^ote d'Azur,
CNRS -- Laboratoire J.-A. Dieudonn\'e,
Parc Valrose,
\hbox{F-06108} Nice cedex 2, France}\\
\email{arnaud.beauville@unice.fr}}
\date{\vspace{-5ex}} 
\journal{\'Epijournal de G\'eom\'etrie Alg\'ebrique} 
\newtheorem{lem}{Lemma}
\newtheorem{prop}{Proposition}
\newtheorem{rem}{Remark}
\def\rond{\kern 1pt{\scriptstyle\circ}\kern 1pt}
\newcommand\Ker{\operatorname{Ker}}
\newcommand\Pic{\operatorname{Pic}}
\newcommand{\im}{\operatorname{Im} }
\newcommand\abs[1]{\lvert {#1}\rvert}
\def\C{\mathbb{ C}}
\def\P{\mathbb{ P}}
\def\A{\mathbb{ A}}
\def\O{\mathcal{ O}}
\def\iso{\vbox{\hbox to .8cm{\hfill{$\scriptstyle\sim$}\hfill}
\nointerlineskip\hbox to .8cm{{\hfill$\longrightarrow $\hfill}} }}
\def\eqalign#1{\null\,\vcenter{\openup\jot\m@th\ialign{
\strut\hfil$\displaystyle{##}$&$\displaystyle{{}##}$
&&\quad\strut$\displaystyle{##}$&$\displaystyle{{}##}$
\crcr#1\crcr}}\,}
\begin{document}


\maketitle



\begin{prelims}


\def\abstractname{Abstract}
\abstract{We attempt to describe the vector bundles on a curve $C$ which are specializations of $\mathcal{O}_C^2\,$. We get a complete classification when $C$ is Brill-Noether-Petri general, or when it is hyperelliptic; in both cases all  limit vector bundles are decomposable. We give examples of indecomposable  limit bundles for some special curves.}

\keywords{Vector bundles; limits; Brill-Noether theory; hyperelliptic curves}

\MSCclass{14H60}

\vspace{0.15cm}

\languagesection{Fran\c{c}ais}{%

\textbf{Titre. Limites du fibr\'e trivial sur une courbe} \commentskip \textbf{R\'esum\'e.} Nous essayons de d\'ecrire les fibr\'es vectoriels qui sont des sp\'ecialisations de $\mathcal{O}_C^{2}$. Nous obtenons une classification compl\`ete lorsque $C$ est g\'en\'erale au sens de Brill-Noether-Petri, ou lorsque $C$ est hyperelliptique; les fibr\'es limites sont d\'ecomposables dans chacune des deux situations. Nous donnons \'egalement des exemples de fibr\'es limites ind\'ecomposables sur certaines courbe sp\'eciales.}

\end{prelims}


\newpage

\setcounter{tocdepth}{1} \tableofcontents

\section{Introduction}
Let $ C $ be a smooth  complex projective curve, and $ E $ a vector bundle on $ C $, of rank $ r $. We will say that $ E $ \emph{is a limit of} $\O_C^r$ if there exists an algebraic family $ (E_b)_{b\in B}  $ of vector bundles on $ C $, parametrized by an algebraic curve $ B $, and a point $ \mathrm{o}\in B $, such that $ E_\mathrm{o}=E $ and $ E_b\cong \O_C^r $ for $ b\neq \mathrm{o} $. Can we classify all these vector bundles? If $E$ is a limit of $\O_C^2$ clearly $E\oplus \O_C^{r-2}$ is a limit of $\O_C^r$, so it seems reasonable to start in rank 2.

We get a complete classification in two extreme cases: when $C$ is generic (in the sense of Brill-Noether theory), and when it is hyperelliptic. In both cases the limit vector bundles are of the form $L\oplus L^{-1}$, with some precise conditions on $L$. However for large families of curves, for instance for plane curves, some limits of $\O_C^2$ are indecomposable, and those seem hard to classify.

\medskip	
\section{Generic curves}
Throughout the paper we denote by $C$  a smooth connected  projective curve of genus $g$ over $\C$.

\begin{prop}\label{ex}
Let $L$ be a line bundle on $C$ which is a limit of globally generated line bundles \emph{(}in particular, any line bundle of degree $\geq g+1)$. Then $L\oplus L^{-1}$ is a limit of $\O_C^2\,$. 
\end{prop}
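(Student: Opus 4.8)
The plan is to treat first the case where $L$ itself is globally generated, and then to deduce the general statement by letting $L$ vary. Note that the parenthetical case is an instance of the hypothesis: once $\deg L\ge g+1$ one has $h^0\ge 2$ and base-point-freeness is a general condition, so the globally generated bundles form a dense open subset of $\Pic^{\deg L}(C)$ and every such $L$ is a limit of globally generated ones. Assume then that $L$ is globally generated; if $L\cong\O_C$ there is nothing to prove, so suppose $h^0(L)=r+1\ge 2$ and let $\varphi\colon C\to\P^r$ be the associated morphism. A two-dimensional subspace $V\subset H^0(L)$ is base-point-free exactly when the corresponding center of projection, a linear $\P^{r-2}\subset\P^r$, is disjoint from $\varphi(C)$; since a general $\P^{r-2}$ misses a fixed curve (as $(r-2)+1<r$), a general pencil $V$ is base-point-free. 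This gives a surjection $\O_C^2\twoheadrightarrow L$ whose kernel is $\det(\O_C^2)\otimes L^{-1}=L^{-1}$, that is, an exact sequence $0\to L^{-1}\to\O_C^2\to L\to 0$. Thus $\O_C^2$ is the extension of $L$ by $L^{-1}$ attached to a nonzero class $e\in\Ext^1(L,L^{-1})$, while the split class $0$ gives $L\oplus L^{-1}$.

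Now the universal extension along the line $\A^1 e\subset\Ext^1(L,L^{-1})$ produces a rank-$2$ locally free sheaf $\mathcal E$ on $C\times\A^1$, flat over $\A^1$, with $\mathcal E_t$ the extension of class $t\,e$. Scaling the class by $t\neq 0$ changes the extension only by the automorphism $t\cdot\mathrm{id}$ of $L^{-1}$, so $\mathcal E_t\cong\mathcal E_e=\O_C^2$ for every $t\neq 0$, whereas $\mathcal E_0=L\oplus L^{-1}$. With $B=\A^1$ and $\mathrm o=0$ this is exactly a family realizing $L\oplus L^{-1}$ as a limit of $\O_C^2$, which settles the globally generated case.

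For the general case let $(L_b)_{b\in B}$ be a family with $L_{\mathrm o}=L$ and $L_b$ globally generated for $b\neq\mathrm o$, given by a line bundle $\mathcal L$ on $C\times B$. I would carry out the previous construction in families. After shrinking $B$ and removing finitely many bad points, a general relative pencil in $\pi_*\mathcal L$ yields a subbundle $\mathcal L^{-1}\hookrightarrow\O_{C\times B}^2$ over $C\times(B\setminus\{\mathrm o\})$, hence a class $e'\in\Ext^1(\mathcal L,\mathcal L^{-1})$ whose restriction $e'_b$ is good, meaning the corresponding extension is $\cong\O_C^2$, for every $b\neq\mathrm o$. Letting $t$ be a local coordinate vanishing at $\mathrm o$, the class $\tilde e=t^{N}e'$ for $N$ large enough to clear the poles of $e'$ along $C\times\{\mathrm o\}$ defines a relative extension $0\to\mathcal L^{-1}\to\mathcal E\to\mathcal L\to 0$ whose restriction to $C\times\{b\}$ is $\cong\O_C^2$ for $b\neq\mathrm o$ (a nonzero good class) and is the split extension $L\oplus L^{-1}$ at $\mathrm o$ (where $\tilde e$ vanishes).

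The step I expect to be the main obstacle is precisely this last construction: one must produce a global extension class that is \emph{generically good yet vanishes at $\mathrm o$}, while simultaneously guaranteeing that the resulting $\mathcal E$ is locally free and flat over $B$ with the prescribed special fibre $L\oplus L^{-1}$. The difficulty is structural: over the special fibre $C\times\{\mathrm o\}$ \emph{no} class can give $\O_C^2$, since $L$ need not be globally generated, so the non-trivial locus of any naive two-parameter family is a divisor and one cannot isolate $\mathrm o$ by cutting with a general curve. Forcing $\tilde e$ to vanish at $\mathrm o$ through the relative $\Ext$ sheaf is exactly what produces the correct degenerate limit, and controlling cohomology and base change for that sheaf, together with the local freeness of $\mathcal E$ along $C\times\{\mathrm o\}$, is the technical heart of the argument.
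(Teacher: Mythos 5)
Your proposal is correct and is essentially the paper's own argument: the paper proves the proposition exactly by your general-case construction (take $B$ affine with $\mathrm{o}=\{f=0\}$, choose two general sections of $\mathcal{L}$ over $C\times B^*$ generating it, and replace the resulting Koszul extension class $e\in H^1(C\times B^*,\mathcal{L}^{-2})$ by $f^ne$, which for $n\gg 0$ extends over $C\times B$ and vanishes along $C\times\{\mathrm{o}\}$), your separate treatment of the globally generated case being just the constant-family instance of this. The step you flag as the ``technical heart'' is in fact harmless: because $B$ is affine and $B^*=\{f\neq 0\}$, the cohomology $H^1(C\times B^*,\mathcal{L}^{-2})$ is the localization of $H^1(C\times B,\mathcal{L}^{-2})$ at $f$, so $f^ne$ extends for $n$ large, and the sheaf $\mathcal{E}$ defined by any such class, being an extension of the line bundle $\mathcal{L}$ by $\mathcal{L}^{-1}$, is automatically locally free of rank $2$ and flat over $B$, with fibrewise extension class the restriction of the global one.
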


\begin{proof}
By hypothesis there exist a curve $B$, a point $\mathrm{o}\in B$ and a line bundle $\mathcal{L}$ on $C\times B$ such that $\mathcal{L}_{|C\times \{\mathrm{o}\} }\cong L$ and $\mathcal{L}_{|C\times \{\mathrm{b}\} }$ is globally generated for $b\neq \mathrm{o}$. We may assume that $B$ is affine and that $\mathrm{o}$ is defined by $f=0$ for a global function $f$ on $B$; we put $B^*:=B\smallsetminus\{\mathrm{o}\} $.

We choose two general sections $s,t$ of $\mathcal{L}$ on $C\times B^*$; reducing $B^*$ if necessary, we may assume that they generate $\mathcal{L}$. Thus we have an exact sequence on $C\times B^*$
\[0\rightarrow \mathcal{L}^{-1}\xrightarrow{\ (t,-s)\ } \O_{C\times B^*}^2\xrightarrow{\ (s,t)\ } \mathcal{L}\rightarrow 0\]which corresponds to an extension class $e\in H^1(C\times B^*,\mathcal{L}^{-2})$. For $n$ large enough, $f^ne$ comes from a class in $H^1(C\times B,\mathcal{L}^{-2})$ which vanishes along $C\times \{\mathrm{o}\} $; this class gives rise to an extension
\[0\rightarrow \mathcal{L}^{-1}\longrightarrow  \mathcal{E}\longrightarrow\mathcal{L}\rightarrow 0\]with $\mathcal{E}_{|C\times \{b\} }\cong \O_C^2$ for $b\neq \mathrm{o}$, and $\mathcal{E}_{|C\times \{\mathrm{o}\} }\cong L\oplus L^{-1}$.\qed
\end{proof}

\begin{rem}\label{prop}{\rm
Let $E$ be a vector bundle limit of $\O_C^2\,$. We have $\det E=\O_C\,$, and $h^0(E)\geq 2$ by semi-continuity. If  $E$ is semi-stable this implies $E\cong\O_C^2\,$; otherwise  $E$ is unstable. Let $L$ be the maximal destabilizing sub-line bundle of $E$; we have an extension $0\rightarrow L\rightarrow E\rightarrow L^{-1}\rightarrow 0$, with $h^0(L)\geq 2$. Note that this extension is trivial (so that $E=L\oplus L^{-1}$) if $H^1(L^2)=0$, in particular if $\deg(L)\geq g$.} \end{rem}

\begin{prop}
Assume that $ C $ is Brill-Noether-Petri general. The following conditions are equivalent:
\begin{itemize}
\item[\rm (i)] $ E $ is a limit of $ \O_C^2 \,;$
\item[\rm (ii)] $ h^0(E)\geq 2  $ and $ \det E=\O_C \, ;$
\item[\rm (iii)] $ E=L\oplus L^{-1} $ for some line bundle $ L $ on $ C $ with $ h^0(L)\geq 2  $ or $ L=\O_C \,.$
\end{itemize}
 \end{prop}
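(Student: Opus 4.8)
The plan is to prove the cycle of implications (iii) $\Rightarrow$ (i) $\Rightarrow$ (ii) $\Rightarrow$ (iii), using the Brill-Noether-Petri hypothesis crucially only in the last step. The implication (i) $\Rightarrow$ (ii) is immediate from Remark \ref{prop}: a limit of $\O_C^2$ has trivial determinant and $h^0\geq 2$ by semi-continuity. For (iii) $\Rightarrow$ (i), the case $L=\O_C$ gives $E=\O_C^2$, which is trivially a limit of itself; and if $h^0(L)\geq 2$ then $L$ is globally generated away from its base locus—more to the point, I would argue that a line bundle with $h^0(L)\geq 2$ is a limit of globally generated line bundles (one can move $L$ in a family to a globally generated bundle of the same degree, or invoke that on a Brill-Noether-Petri general curve the relevant $W^r_d$ are well-behaved), so that Proposition \ref{ex} applies and $L\oplus L^{-1}$ is a limit of $\O_C^2$.

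The heart of the argument is (ii) $\Rightarrow$ (iii). Starting from $E$ with $\det E=\O_C$ and $h^0(E)\geq 2$, I would distinguish the semi-stable and unstable cases exactly as in Remark \ref{prop}. If $E$ is semi-stable with trivial determinant and $h^0(E)\geq 2$, I expect $E\cong\O_C^2$ (this is the content of the first sentence of the remark, and fits case $L=\O_C$). If $E$ is unstable, let $L$ be the maximal destabilizing sub-line bundle, giving $0\to L\to E\to L^{-1}\to 0$ with $\deg L>0$ and, pulling back global sections, $h^0(L)\geq 2$: here one must check that the two independent sections of $E$ cannot both die in $L^{-1}$, which would force $h^0(L^{-1})\geq 1$ and hence $\deg L^{-1}\geq 0$, contradicting $\deg L>0$ unless things degenerate, so at least one section survives in $L$ after accounting for the destabilizing property. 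Then Remark \ref{prop} tells me the extension splits as soon as $H^1(L^2)=0$.

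The main obstacle—and where Brill-Noether-Petri genericity enters—is showing that the extension actually splits, i.e. that $H^1(L^2)=0$, even when $\deg L$ is small (the remark only handles $\deg L\geq g$ directly). With $h^0(L)\geq 2$, I would invoke the Petri condition: on a Brill-Noether-Petri general curve, for a line bundle $L$ with $h^0(L)=r+1$ one has $h^0(L)\cdot h^1(L)\leq g$ via injectivity of the Petri map $\mu_0\colon H^0(L)\otimes H^0(K_C\otimes L^{-1})\to H^0(K_C)$, and more precisely the geometry of $W^1_d$ constrains $L$. The plan is to use the multiplication/Petri map for $L^2$ (or the base-point-free pencil trick applied to a pencil in $H^0(L)$) to force $H^1(L^2)=0$: the base-point-free pencil trick gives an exact sequence relating $H^0(L)\otimes L$ to $L^2$, and injectivity of the relevant Petri-type map on the general curve yields the vanishing of $H^1(L^2)$. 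This vanishing is precisely the delicate point that fails on special curves and explains why the clean decomposable classification holds only in the Brill-Noether-Petri general case; verifying it rigorously via the Petri map is the step I expect to require the most care.
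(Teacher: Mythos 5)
Your proposal is correct and takes essentially the same route as the paper: the same cycle (iii)$\Rightarrow$(i)$\Rightarrow$(ii)$\Rightarrow$(iii), with (iii)$\Rightarrow$(i) obtained by realizing $L$ as a limit of globally generated line bundles and invoking Proposition \ref{ex}, and (ii)$\Rightarrow$(iii) via the destabilizing extension $0\to L\to E\to L^{-1}\to 0$ together with the vanishing $H^1(C,L^2)=0$. The only difference is that where you sketch a proof of this vanishing by the base-point-free pencil trick and Petri injectivity (which does work, provided you first split off the base divisor of $\abs{L}$), the paper simply cites \cite[Ch.~21, Proposition 6.7]{ACG} for $H^0(C,K_C\otimes L^{-2})=0$ --- your sketch is precisely the standard proof of that cited result.
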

\begin{proof}  We have seen that (i)   implies  (ii)   (Remark \ref{prop}). Assume  (ii)   holds, with $E\not\cong \O_C^2\,$. Then $E$ is unstable, and
 we have an extension $0\rightarrow L\rightarrow E\rightarrow L^{-1}\rightarrow 0$ with $h^0(L)\geq 2$. 
Since $ C $ is Brill-Noether-Petri general we have $ H^0(C,K_C\otimes L^{-2})=0  $ \cite[Ch.\ 21, Proposition 6.7]{ACG}, hence $ H^1(C,L^2)=0  $. Therefore the above extension is trivial, and we get  (iii).
 
 \smallskip
 Assume that (iii)   holds. 
  Brill-Noether theory implies that 
   any line bundle $ L $ with $ h^0(L)\geq 2  $ is a limit of globally generated ones 
 \footnote{Indeed, the subvariety $W^r_d$ of $\Pic^d(C)$  parametrizing  line bundles $L$ with $h^0(L)\geq  r+1$ is equidimensional, of  dimension $g-(r+1)(r+g-d)$; the line bundles which are not globally generated belong to the subvariety $W^r_{d-1}+C$, which has codimension~$r$.}.
   So (i) follows from Proposition \ref{ex}.\qed
 \end{proof}

 \section{Hyperelliptic curves}
 
 \begin{prop}
Assume that $C$ is hyperelliptic, and let $H$ be the line bundle on $C$ with $h^0(H)=\deg(H)=2$. The limits of $\O_C^2$ are the decomposable bundles $L\oplus L^{-1}$, with $\deg(L)\geq g+1$ or $L=H^k$ for $k\geq 0$.
\end{prop}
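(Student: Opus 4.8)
The plan is to prove both inclusions, feeding off Proposition \ref{ex} and Remark \ref{prop}, with the hyperelliptic involution dictating which line bundles can occur. Sufficiency will be immediate: every power $H^k$ is globally generated (since $H$ is base-point-free), so it is trivially a limit of globally generated line bundles, and Proposition \ref{ex} then makes $H^k\oplus H^{-k}$ a limit of $\O_C^2$ (with $k=0$ giving $\O_C^2$ itself); the case $\deg L\ge g+1$ is Proposition \ref{ex} verbatim. So the work is entirely in the converse.

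For necessity I would start from Remark \ref{prop}: a limit $E$ is either $\O_C^2=H^0\oplus H^{-0}$, or it is unstable and fits in $0\to L\to E\to L^{-1}\to 0$ with $h^0(L)\ge 2$ and $d:=\deg L\ge 1$. If $d\ge g+1$ the extension splits by Remark \ref{prop} and $E=L\oplus L^{-1}$ already lies in the list, so I may assume $d\le g$; then $L$ is special. Here I would invoke the classical structure of special linear systems on a hyperelliptic curve — the base-point-free part of any special $g^r$ with $r\ge 1$ is a multiple of the $g^1_2$ — to write $L=H^k\otimes\O_C(B)$, where $k\ge 1$, $B\ge 0$ is the base divisor of $|L|$, and $h^0(L)=h^0(H^k)=k+1$. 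Note that $B$ can contain no conjugate pair $p+\iota p\sim H$, since otherwise $L\ge H^{k+1}(\cdot)$ would force $h^0(L)\ge k+2$. It then remains to prove $B=0$ and that the extension is trivial.

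This last point is the main obstacle, and the crux is that — unlike the Brill–Noether–Petri case — one cannot quote $H^1(L^2)=0$, which on a hyperelliptic curve typically fails; so splitting is not automatic and the base divisor must be removed by hand. My plan is to obtain both facts from a single semicontinuity estimate after twisting the degenerating family by $H^k$. Since $\mathcal{E}\otimes p_C^\ast H^k$ is flat with general fibre $\O_C^2\otimes H^k=H^k\oplus H^k$, upper semicontinuity of $h^0$ yields $h^0(E\otimes H^k)\ge 2\,h^0(H^k)=2(k+1)$. Twisting the extension gives $0\to H^{2k}(B)\to E\otimes H^k\to\O_C(-B)\to 0$, so that $h^0(E\otimes H^k)=h^0(H^{2k}(B))+\dim\ker\!\big(H^0(\O_C(-B))\to H^1(H^{2k}(B))\big)$. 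The delicate computation, which I expect to be the heart of the proof, is that $h^0(H^{2k}(B))=2k+1$ whenever $2k+\deg B\le g$: using $K_C=H^{g-1}$, Serre duality gives $h^1(H^{2k}(B))=h^0(H^{g-1-2k}(-B))$, and since the sections of $H^{\,j}$ ($j\le g$) are pulled back from $\P^1$ while $B$ (having no conjugate pair) has $\deg B$ distinct images there, this equals $g-2k-\deg B$; Riemann–Roch then gives $h^0(H^{2k}(B))=2k+1$. Feeding this back: if $\deg B\ge 1$ then $h^0(\O_C(-B))=0$ and $h^0(E\otimes H^k)=2k+1<2(k+1)$, contradicting the estimate, so $B=0$ and $L=H^k$; and once $B=0$ the same inequality forces the connecting map $H^0(\O_C)\to H^1(H^{2k})$ to vanish, i.e. the extension class is zero and $E=H^k\oplus H^{-k}$. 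The two points I would take care to pin down rigorously are the hyperelliptic section count $h^0(H^{2k}(B))=2k+1$ and the fact that the base divisor $B$ can carry no conjugate pair.
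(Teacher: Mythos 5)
Your proposal is correct and takes essentially the same route as the paper: reduce via Remark \ref{prop} to the case $\deg L\le g$, write $L=H^k(B)$ with $B$ a base divisor containing no conjugate pair (the paper proves this as its Lemma \ref{lemma1}; you quote the classical structure of special linear series on hyperelliptic curves, and your key count $h^0(H^{2k}(B))=2k+1$ is exactly the paper's Lemma \ref{lemma1}(1)), then use semicontinuity $h^0(E\otimes H^k)\ge 2k+2$ against the twisted cohomology sequence to force $B=0$ and the vanishing of the connecting map, hence the splitting $E=H^k\oplus H^{-k}$. One small wording fix: when $B$ has points of multiplicity $>1$ its image in $\P^1$ is not $\deg B$ \emph{distinct} points but a divisor of degree $\deg B$; the conditions it imposes on pullback sections are nonetheless independent because $\deg B\le g-2k$, so your count (and the proof) stands.
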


\begin{proof} Let $\pi :C\rightarrow \P^1$ be the two-sheeted covering defined by $\lvert H\rvert$. Let us say that an effective divisor $D$ on $C$ is \emph{simple} if it does not contain a divisor of the form $\pi ^*p$ for $p\in\P^1$.
We will need the following well-known lemma:

\begin{lem}
\label{lemma1}
Let $L$ be a line bundle on $C$.
\begin{itemize}
\item[\rm 1)] If $L=H^k(D)$ with  $D$ simple and $\deg(D)+k\leq g$, we have $h^0(L)=h^0(H^k)=k+1$.
\item[\rm 2)] If $\deg(L)\leq g$,  $L$ can be written in a unique way $H^k(D)$ with $D$ simple. If $L$  is globally generated, it is a power of $H$.
\end{itemize}
\end{lem}

\begin{proof}[Proof of Lemma \ref{lemma1}] 1) Put $\ell:=g-1-k$ and $d:=\deg(D)$. Recall that $K_C\cong H^{g-1}$. Thus by Riemann-Roch, the first assertion is equivalent to $h^0(H^{\ell }(-D))=h^0(H^{\ell })-d$. 
We have $H^0(C,H^{\ell })=\allowbreak \pi ^*H^0(\P^1,\O_{\P^1}(\ell))$; since $D$ is simple of degree $\leq \ell+1$, it imposes $d$
 independent conditions on $H^0(C,H^{\ell })$, hence our claim.

\smallskip	
2) Let $k$ be the greatest integer such that $h^0(L\otimes H^{-k})>0$; then $L=H^k(D)$ for some effective divisor $D$, which is simple since $k$ is maximal.  By 1) $D$ is the fixed part of $\abs{L}$, hence is uniquely determined, and so is $k$.
In particular the only globally generated line bundles on $C$ of degree $\leq g$ are the powers of $H$.
\qed
\end{proof}
 
 \noindent\emph{Proof of the Proposition} : Let $E$ be a vector bundle on $C$ limit of $\O_C^2\,$. Consider the exact sequence 
 \begin{equation}\label{hyp}
0\rightarrow L\rightarrow E\rightarrow L^{-1}\rightarrow 0\, ,
\end{equation}
 where
 we can assume $\deg(L)\leq g$ (Remark \ref{prop}). By  Lemma \ref{lemma1} we have $L=H^k(D)$ with $D$ simple of degree $\leq g-2k$. After tensor product with $H^k$, the corresponding cohomology exact sequence reads 
 \[0\rightarrow H^0(C,H^{2k}(D))\rightarrow H^{0}(C,E\otimes H^k)\rightarrow H^0(C,\O_C(-D))\xrightarrow{\ \partial \ } H^1(C,H^{2k}(D))\]
 which implies 
 $h^0(E\otimes H^k)= h^0(H^{2k}(D))+\dim\Ker \partial =2k+1+\dim\Ker \partial \ $ by Lemma~\ref{lemma1}. 
 
 By semi-continuity we have $h^0(E\otimes H^k)\geq 2h^0(H^k)=2k+2$; the only possibility is  $D=0$ and $\partial =0$.
 But $\partial (1)$ is the class of the extension (\ref{hyp}), which must therefore be trivial; hence $E=H^k\oplus H^{-k}$.\qed
 \end{proof}

\section{Examples of indecomposable limits}

To prove that some limits of $\O_C^2$ are indecomposable we will need the following easy lemma:

\begin{lem}\label{split}
Let $L$ be a line bundle of positive degree on $C$, and let \begin{equation}\label{2}
0\rightarrow L\rightarrow E\rightarrow L^{-1}\rightarrow 0
\end{equation}  be an exact sequence. The following conditions are equivalent:
\begin{itemize}
\item[\rm (i)] $E$ is indecomposable;
\item[\rm (ii)] The extension \emph{(\ref{2})} is nontrivial;
\item[\rm (iii)] $h^0(E\otimes L)=h^0(L^2)$.
\end{itemize}
\end{lem}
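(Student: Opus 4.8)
The plan is to deduce the equivalences from two separate analyses: a cohomological computation giving (ii) $\Leftrightarrow$ (iii), and a direct study of line subbundles giving (i) $\Leftrightarrow$ (ii).

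First I would tensor the exact sequence (\ref{2}) by $L$, which is exact since $L$ is locally free, to get
\[0\rightarrow L^2\longrightarrow E\otimes L\longrightarrow \O_C\rightarrow 0,\]
and then pass to the long exact cohomology sequence
\[0\rightarrow H^0(L^2)\rightarrow H^0(E\otimes L)\rightarrow H^0(\O_C)\xrightarrow{\ \partial\ } H^1(L^2).\]
The point is that, under the natural identification $\Ext^1(\O_C,L^2)=H^1(L^2)$, the connecting map $\partial$ sends $1\in H^0(\O_C)=\C$ to the class of the tensored extension, which corresponds to the extension class $e$ of (\ref{2}) under the isomorphism $\Ext^1(L^{-1},L)\cong H^1(L^2)$. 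Hence $h^0(E\otimes L)=h^0(L^2)+\dim\Ker\partial$, and $\Ker\partial$ vanishes precisely when $e\neq 0$. This gives (ii) $\Leftrightarrow$ (iii).

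The implication (i) $\Rightarrow$ (ii) is immediate: a trivial extension yields $E\cong L\oplus L^{-1}$, hence decomposable. For the substantive converse (ii) $\Rightarrow$ (i) I would argue by contraposition, assuming $E$ decomposable and showing (\ref{2}) splits. Writing $E=M\oplus N$ and using $\det E=L\otimes L^{-1}=\O_C$ forces $N\cong M^{-1}$, so $E=M\oplus M^{-1}$. Composing the inclusion $L\hookrightarrow E$ with the two projections, a nonzero map $L\to M$ (resp.\ $L\to M^{-1}$) requires $\deg L\leq \deg M$ (resp.\ $\deg L\leq -\deg M$); since $\deg L>0$ these cannot hold simultaneously, so $L$ maps nontrivially to exactly one summand, say $M$, and is contained in the subbundle $M\subset E$. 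As the quotient $L^{-1}$ is locally free, $L$ is saturated in $E$, hence in $M$; a saturated rank-one subsheaf of a line bundle equals that line bundle, so $L=M$. Then $L\hookrightarrow E$ is the inclusion of a direct factor, split by the projection $E\to M=L$, and (\ref{2}) is trivial.

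The one delicate point — and where the hypothesis $\deg L>0$ is indispensable — is precisely this last step: positivity is exactly what prevents $L$ from mapping nontrivially to both summands, and what lets the saturation argument conclude $L=M$. Everything else is formal, so I expect the degree bookkeeping in (ii) $\Rightarrow$ (i) to be the main thing to get right.
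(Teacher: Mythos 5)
Your proof is correct, and its cohomological half coincides with the paper's: the paper also tensors (\ref{2}) by $L$ and uses that the connecting map $\partial$ sends $1\in H^0(\O_C)$ to the extension class, which is exactly how it obtains (ii)$\ \Rightarrow\ $(iii). Where you differ is the logical organization and the treatment of decomposability. The paper runs the single cycle (i)$\Rightarrow$(ii)$\Rightarrow$(iii)$\Rightarrow$(i), and for the last arrow argues that a decomposable $E$ must equal $L\oplus L^{-1}$ ``by unicity of the destabilizing bundle'' (an appeal to stability theory, where $\deg L>0$ makes $L$ destabilizing), after which the count $h^0\bigl((L\oplus L^{-1})\otimes L\bigr)=h^0(L^2)+1$ contradicts (iii). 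You instead prove the two equivalences (ii)$\Leftrightarrow$(iii) and (i)$\Leftrightarrow$(ii) separately, getting (ii)$\Rightarrow$(i) by contraposition: your degree bookkeeping (nonzero maps would force $\deg L\leq \deg M$ and $\deg L\leq -\deg M$, impossible for $\deg L>0$) plus the saturation argument ($E/L\cong L^{-1}$ torsion-free forces $M/L=0$) shows $L=M$, and then the projection $E\to M$ splits the sequence. In effect you have unpacked the paper's one-line appeal to uniqueness of the destabilizing subbundle into an elementary, self-contained argument, and you conclude by exhibiting a splitting rather than by the $h^0$ count; the paper's version is shorter but delegates the key point to standard Harder--Narasimhan theory. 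Both proofs rest on the same two facts --- that $\partial(1)$ is the extension class, and that a decomposable $E$ containing a saturated $L$ of positive degree must be $L\oplus L^{-1}$ --- so the difference is one of route and explicitness, not of substance.
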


\begin{proof} The implication (i)$\ \Rightarrow\ $(ii) is clear.

(ii)$\ \Rightarrow\ $(iii) : After tensor product with $L$, the  cohomology exact sequence associated to (\ref{2}) gives
\[0\rightarrow H^0(L^2)\xrightarrow{\ i\ } H^0(E\otimes L)\longrightarrow H^0(\O_C)\xrightarrow{\ \partial \ }H^1(L^2)\, , \]where $\partial $ maps $1\in H^0(\O_C)$ to the extension class of (\ref{2}). Thus (ii) implies that  $i$ is an isomorphism, hence (iii).

(iii)$\ \Rightarrow\ $(i): If $E$ is decomposable, it must be equal to $L\oplus L^{-1}$ by unicity of the  destabilizing  bundle. But this  implies $h^0(E\otimes L)=h^0(L^2)+1$. \qed
\end{proof}

The following construction was suggested by N. Mohan Kumar:

\begin{prop}
\label{proposition4}
Let $C\subset \P^2$ be a smooth plane curve, of degree $d$. For $0<k<\dfrac{d}{4} $, there exist extensions
\[0\rightarrow \O_{C}(k)\rightarrow E \rightarrow \O_C(-k)\rightarrow 0\] such that $E$ is indecomposable and is a limit of $\O_C^2\,$.
\end{prop}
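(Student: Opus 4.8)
The plan is to reduce the statement to a nonvanishing of an extension class, to produce the limit by an explicit Hecke-type degeneration of the trivial family, and to extract indecomposability from Lemma \ref{split}.

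First I would set $L=\O_C(k)$ and record the two easy reductions. Since $\deg L=kd>0$, any $E$ fitting in $0\to L\to E\to L^{-1}\to0$ has $L$ as its maximal destabilizing sub-line-bundle (a sub-bundle of degree $>kd$ would map nontrivially to $L^{-1}$, which is impossible), so Lemma \ref{split} applies verbatim: $E$ is indecomposable precisely when the extension is nontrivial, i.e. when its class $\xi\in\Ext^1(L^{-1},L)=H^1(C,\O_C(2k))$ is nonzero. I would also note that such nonzero classes exist: under $k<d/4$ one has $2k\le d-3$, hence $H^1(C,\O_C(2k))\cong H^0(C,\O_C(d-3-2k))^{*}\neq0$. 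Thus the whole problem is to realize some nonzero $\xi$ by a bundle that is a limit of $\O_C^2$.

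Second, to produce limits I would degenerate the trivial family by a modification along the central fibre. Pick a base-point-free pencil $(F,G)\subset|\O_C(k)|$ (possible since $\O_C(k)$ is very ample for $k>0$; concretely $F,G$ are two general forms of degree $k$ whose $k^2$ common points miss $C$). On $C\times\A^1$ (coordinate $t$) modify $\O_{C\times\A^1}^2$ along $C\times\{0\}$ with cocharacter $(k,-k)$: away from $t=0$ nothing changes, so the general fibre is $\O_C^2$, while a standard computation of the restriction of the modification to $C\times\{0\}$ puts the special fibre $E$ in an exact sequence $0\to\O_C(k)\to E\to\O_C(-k)\to0$. (Equivalently, one may realize $E$ as the restriction to $C$ of a rank-two sheaf on $\P^2$ obtained by the Serre construction from a length-$k^2$ subscheme disjoint from $C$, degenerated from complete intersections of two degree-$k$ forms.) This already shows every bundle so obtained is a limit of $\O_C^2$.

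The delicate point — and the main obstacle — is to arrange that the limiting class $\xi$ is nonzero. The minimal modifications do not suffice: a single elementary modification with locally free rank-one quotient always splits, since the syzygy $(G,-F)$ of the pencil extends to a global section of the modified sheaf and splits the special fibre (this merely reproves Proposition \ref{ex}, giving $\O_C(k)\oplus\O_C(-k)$). I would therefore use the full cocharacter-$(k,-k)$ modification, whose nontrivial ``unipotent'' part is encoded by a datum $w\in H^0(C,\O_C(2k))$, and compute the induced $\xi\in H^1(C,\O_C(2k))$, showing it is nonzero for generic $w$; by Lemma \ref{split} this is the same as checking that the equality $h^0(E\otimes\O_C(k))=h^0(\O_C(2k))$ (rather than $h^0(\O_C(2k))+1$) actually occurs. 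Here the hypothesis $0<k<d/4$ is essential: the bounds $2k<d$ and $4k<d$ make the groups $H^0(C,\O_C(2k))$, $H^0(C,\O_C(4k))$ and $H^0(C,\O_C(d-3-2k))$ equal to their values computed on $\P^2$, so that the multiplication pairing $H^0(\O_C(2k))\otimes H^0(\O_C(d-3-2k))\to H^0(K_C)$ detecting $\xi$ by Serre duality is the restriction of the surjective polynomial multiplication on $\P^2$, and hence does not vanish on the class produced by $w$. Establishing this nonvanishing — equivalently, that the degeneration can be steered away from the split bundle — is the crux of the argument.
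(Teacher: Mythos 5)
Your reductions (Lemma \ref{split}, existence of nonzero classes in $H^1(\O_C(2k))$) are fine, and your degeneration does plausibly produce limits of $\O_C^2$ sitting in extensions of $\O_C(-k)$ by $\O_C(k)$; but the proof has a genuine gap at exactly the point you yourself flag as ``the crux'': you never prove that the extension class $\xi$ of the special fibre is nonzero. The final paragraph does not do this. Surjectivity of the multiplication map $H^0(\O_C(2k))\otimes H^0(\O_C(d-2k-3))\to H^0(K_C)$ says nothing about the particular class $\xi$ produced by your degeneration, because you never compute $\xi$ in terms of the datum $w$; ``hence does not vanish on the class produced by $w$'' is a non sequitur. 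If one does carry out that computation -- this is precisely the paper's second construction, the monad on $C\times\A^1$ -- one finds $\xi=u^2e$, where $e\in H^1(L^{-2})$ is the Koszul class of a base-point-free pencil $(s,t)$ in $|L|$ and $u\in H^0(L^2)$. By Serre duality, $u^2e\neq 0$ amounts to $u^2\cdot H^0(K_CL^{-2})\not\subseteq sH^0(K_CL)+tH^0(K_CL)$, i.e.\ to a statement about the Artinian complete intersection $\C[x_0,x_1,x_2]/(S,T,F)$ in its socle degree $d+2k-3$ -- a Lefschetz-type property that is genuinely delicate. The paper explicitly concedes this difficulty (``Unfortunately it seems difficult in general to decide whether the extension $L\rightarrow E\rightarrow L^{-1}$ [is] nontrivial'') and resolves it only in the special case $L^2\cong K_C$ (Proposition \ref{theta}), via Max Noether's theorem for non-hyperelliptic curves. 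So your plan proves the Proposition only modulo an unestablished nonvanishing, which is the entire content of the statement.

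The paper's actual proof avoids this issue by never working with a locally free quotient on the curve. Take $Z\subset\P^2$ a complete intersection of two degree-$k$ curves with $Z\cap C=\varnothing$, and a general extension $0\to\O_{\P^2}(k)\to E\to\mathcal{I}_Z(-k)\to 0$: Str\o mme's theorem \cite[Remark 4.6]{S} asserts that such an $E$ is a limit of $\O_{\P^2}^2$, hence $E_{|C}$ is a limit of $\O_C^2$ -- this replaces your Hecke degeneration and is a nontrivial citation you would need anyway. Indecomposability then follows from a section count rather than from steering a class away from zero: $h^0(E(k))=h^0(\O_{\P^2}(2k))$ because $h^0(\mathcal{I}_Z)=0$ (the nonemptiness of $Z$ is the whole mechanism), and $H^0(E(k))\to H^0(E_{|C}(k))$ is surjective because $H^1(E(k-d))=0$, which one gets by Serre duality and the Koszul resolution of $\mathcal{I}_Z$, using precisely $4k<d$. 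Hence $h^0(E_{|C}(k))=h^0(\O_C(2k))$, and Lemma \ref{split} gives indecomposability. If you want to salvage your write-up, the cleanest route is to promote your parenthetical remark (the Serre-construction variant) to the main argument along these lines, citing \cite{S} for the limit statement.
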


\begin{proof} 
Let $Z$ be a finite subset of $\P^2$ which is the complete intersection of two curves of degree $k$, and such that $C\cap Z=\varnothing$. By \cite[Remark 4.6]{S}, for a general extension
\begin{equation}
\label{3}
0\rightarrow \O_{\P^2}(k)\rightarrow E\rightarrow \mathcal{I}_Z(-k)\rightarrow 0\, ,
\end{equation}
the vector bundle $E$ is a limit of $\O_{\P^2}^2$; therefore $E_{|C}$ is a limit of $\O_C^2\,$.

The extension (\ref{3}) restricts to an exact sequence
\[0\rightarrow \O_C(k)\rightarrow E_{|C}\rightarrow \O_C(-k)\rightarrow 0\, .\]

To prove that $E_{|C}$ is indecomposable, it suffices by Lemma \ref{split}  to prove that 
$ h^0(E_{|C}(k))=h^0(\O_C(2k))$. Since $2k<d$ we have $h^0(\O_C(2k))=h^0(\O_{\P^2}(2k))=h^0(E(k))$, so in view of the exact sequence
\[0\rightarrow E(k-d)\longrightarrow E(k)\longrightarrow E_{|C}(k)\rightarrow 0\]it suffices to prove  $H^1(E(k-d))=0$, or  by Serre duality $H^1(E(d-k-3))=0$.

The exact sequence (\ref{3}) gives an injective map $H^1(E(d-k-3))\hookrightarrow H^1(\mathcal{I}_Z(d-2k-3))$. Now since $Z$ is a complete intersection we have an exact sequence
\[0\rightarrow \O_{\P^2}(-2k)\rightarrow \O_{\P^2}(-k)^2\rightarrow \mathcal{I}_Z\rightarrow 0\, ;\]
since $4k<d$ we have $H^2(\O_{\P^2}(d-4k-3))=0$, hence $H^1(\mathcal{I}_Z(d-2k-3))=0$, and finally $H^1(E(d-k-3))=0$ as asserted.\qed
\end{proof}

\medskip	
We can also perform the Str\o mme construction directly on the curve $C$, as follows.
Let $L$ be a base point free line bundle on $C$. We choose sections $s,t\in H^0(L)$ with no common zero. This gives rise to a Koszul extension
\begin{equation}\label{K}
0\rightarrow L^{-1}\xrightarrow{\ i\ } \O_C^2 \xrightarrow{\ p\ } L\rightarrow 0\quad\mbox{with }\  i=(-t,s)\,,\ p=(s,t)\,.\end{equation}
We fix a nonzero  section $u\in H^0(L^2)$. Let $\mathcal{L}$ be the pull-back of $L$ on $C\times \A^1$.  We consider  the complex (``monad")
\[\mathcal{L}^{-1}\xrightarrow{\ \alpha \ }\mathcal{L}^{-1}\oplus \O^2\oplus \mathcal{L} \xrightarrow{\ \beta \ }\mathcal{L}\ ,\qquad \alpha =(\lambda  ,i,u)\,,\ \beta =(u,p,-\lambda ),  \]where $\lambda $ is the coordinate on $\A^1$.
 Let $\mathcal{E}:=\Ker \beta /\im \alpha$, and let $E:=\mathcal{E}_{|C\times \{0\} }$.
 
 \begin{lem}
$E$ is a rank 2 vector bundle, limit of $\O_C^2\,$. There is  an exact sequence \allowbreak$0\rightarrow L\rightarrow E\rightarrow L^{-1}\rightarrow 0$; the corresponding extension class in  $H^1(L^2)$ is the product  by $u^2\in H^0(L^4)$ of the  class $e\in H^1(L^{-2})$ of the Koszul extension $(\ref{K})$.
\end{lem}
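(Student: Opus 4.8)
The plan is to read everything off the monad, beginning with the check that it genuinely defines a rank-two bundle. First I would verify $\beta\circ\alpha=0$: the cross terms $u\cdot\lambda$ and $(-\lambda)\cdot u$ cancel, and $p\circ i=0$ is the Koszul relation, so $\mathcal E$ is well defined. To see that $\mathcal E$ is locally free of rank $2=(1+2+1)-1-1$ (hence flat over $\A^1$), I would check the two pointwise monad conditions at every $(x,\lambda)\in C\times\A^1$. The map $\alpha$ is fibrewise injective because its middle component $i=(-t,s)$ is nowhere zero ($s,t$ have no common zero), and $\beta$ is fibrewise surjective because its middle component $p=(s,t)$ is already surjective; crucially both statements hold at $\lambda=0$ as well. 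Thus $\Ker\beta$ is a subbundle, $\im\alpha\cong\mathcal L^{-1}$ a subbundle of it, and $\mathcal E=\Ker\beta/\im\alpha$ is a rank-two vector bundle on $C\times\A^1$.

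For the limit statement I would restrict to $\lambda\neq 0$, where multiplication by $\lambda$ is invertible. Solving $\beta=0$ for the $\mathcal L$-coordinate identifies $\Ker\beta$ with $\mathcal L^{-1}\oplus\O^2$, under which $\im\alpha$ becomes the graph $\{(\lambda\sigma,i\sigma)\}$; since $\lambda$ is invertible this graph projects isomorphically onto the $\mathcal L^{-1}$ factor, so the quotient is $\O^2$. Hence $\mathcal E_{|C\times\{\lambda\}}\cong\O_C^2$ for $\lambda\neq0$, and $E=\mathcal E_{|C\times\{0\}}$ is a limit of $\O_C^2$, the family being $\mathcal E$ over $B=\A^1$ with $\mathrm o=0$.

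Next I would produce the extension at $\lambda=0$, where $\alpha_0=(0,i,u)$ and $\beta_0=(u,p,0)$; since $\alpha_0,\beta_0$ remain fibrewise injective/surjective, $E=\Ker\beta_0/\im\alpha_0$. The $\mathcal L$-summand gives a map $L\to\Ker\beta_0$, $\rho\mapsto(0,0,\rho)$ (it lands in $\Ker\beta_0$ because the $-\lambda\rho$ term is gone), and it meets $\im\alpha_0=\{(0,i\sigma,u\sigma)\}$ trivially since $i$ is injective, so it defines a subbundle $L\hookrightarrow E$. The projection $(\tau,v,\rho)\mapsto\tau$ from $\Ker\beta_0=\{u\tau+p(v)=0\}$ onto $L^{-1}$ is surjective (as $p$ is), with kernel $\{(0,v,\rho):p(v)=0\}$, which is exactly $\im\alpha_0$ together with the $\mathcal L$-summand; this yields $E/L\cong L^{-1}$ and the sequence $0\to L\to E\to L^{-1}\to 0$.

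The main work, and the step I expect to be most delicate, is identifying the extension class $\epsilon\in H^1(L^2)$, which I would compute by \v{C}ech cocycles on an open cover $\{U_i\}$. A local splitting of $E\to L^{-1}$ is a local lift $v_i\colon L^{-1}\to\O^2$ of $-u\colon L^{-1}\to L$ through $p$ (taking the $\rho$-coordinate to be $0$); then $\epsilon$ is represented by $\ell_{ij}=-u\,\sigma_{ij}$, where $v_i-v_j=i\circ\sigma_{ij}$. Comparing with a \v{C}ech description of the Koszul class $e=[\{c_{ij}\}]\in H^1(L^{-2})$, coming from local splittings $w_i\colon L\to\O^2$ of $p$ via $w_i-w_j=i\circ c_{ij}$, I would make the convenient choice $v_i=w_i\circ(-u)$ (meaning $v_i(\tau)=w_i(-u\tau)$), which is a legitimate lift and gives $\sigma_{ij}=-u\,c_{ij}$, whence $\ell_{ij}=u^2 c_{ij}$. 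Therefore $\epsilon=u^2\cdot e$, the product of $u^2\in H^0(L^4)$ with $e\in H^1(L^{-2})$ landing in $H^1(L^2)$, as claimed. The only subtlety to keep in mind is that altering the local lifts changes these cocycles only by coboundaries, so the particular choice $v_i=w_i\circ(-u)$ computes the correct class.
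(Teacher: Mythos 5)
Your proof is correct. Its first half coincides with the paper's: check $\beta\circ\alpha=0$, identify $\mathcal{E}$ over $\lambda\neq 0$ with $\O_C^2$, and exhibit $E$ as an extension of $L^{-1}$ by $L$ --- and you are in fact more explicit than the paper here, since you verify the fibrewise monad conditions (injectivity of $\alpha$ and surjectivity of $\beta$, both valid at $\lambda=0$ because $s,t$ have no common zero) uniformly in $\lambda$, obtaining local freeness of $\mathcal{E}$ and flatness over $\A^1$ in one stroke, whereas the paper treats $\lambda\neq 0$ as ``easy'' and handles the fibre at $0$ separately via its extension structure. Where you genuinely diverge is the identification of the extension class, which is the heart of the lemma. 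The paper never writes a cocycle: it applies the snake lemma to the inclusion of the Koszul sequence into $0\to N\to L^{-1}\oplus\O_C^2\to L\to 0$, where $N=\Ker(u,p)$ and $\Ker\beta_0=L\oplus N$, producing an intermediate extension $0\to L^{-1}\to N\to L^{-1}\to 0$; one commutative diagram identifies this as the pull-back of the Koszul extension by $\times u$, a second identifies $0\to L\to E\to L^{-1}\to 0$ as the push-forward of it by $\times u$, and functoriality of extension classes gives $u\cdot(u\cdot e)=u^2e$ with no sign bookkeeping. You instead compute the class by a direct \v{C}ech calculation, splitting $E\to L^{-1}$ by lifts $v_i$ of $-u$ through $p$ and making the choice $v_i=w_i\circ(-u)$ to compare with the Koszul cocycle; I checked your computation ($\ell_{ij}=-u\,\sigma_{ij}$ after reducing modulo $\im\alpha_0$, and $\sigma_{ij}=-u\,c_{ij}$, hence $\ell_{ij}=u^2c_{ij}$), and it is correct, the two signs cancelling as they should, provided the same splitting convention is used for both classes. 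The paper's route is shorter and conceptually cleaner (pull-back and push-forward each multiply the class by $u$); yours is more elementary and self-contained, at the cost of carrying cocycle conventions, and it has the side benefit of making rigorous the fibrewise statements the paper leaves implicit.
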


\begin{proof} The proof is essentially the same as in \cite{S}; we give the details for completeness. 
 
 For $\lambda  \neq 0$, we get easily $\mathcal{E}_{|C\times \{\lambda \}  }\cong\O_C^2\,$; we will show that $E$ is a rank 2 vector bundle. This implies that $\mathcal{E}$ is a vector bundle on $C\times \A^1$,
  and therefore that $E$ is a  limit of $\O_C^2\,$.
  
  Let us denote by $\alpha _0,\beta _0$ the restrictions of $\alpha $ and $\beta $ to $C\times \{0\} $.
 We have $\Ker \beta_0 =\allowbreak L\oplus N$, where $N$ is the kernel of $(u,p):L^{-1}\oplus \O_C^2\rightarrow L$. Applying the snake lemma to the commutative diagram
 \[\xymatrix@M=5pt{0\ar[r] &L^{-1}\ar[r]^{i}\ar[d]&\O^2\ar[r]^{p}\ar@{^{(}->}[d]&L \ar[r]\ar@{=}[d]&0\\
0\ar[r] &N\ar[r]& L^{-1}\oplus \O^2 \ar[r]& L \ar[r]&0\\
}\]we get an exact sequence 
\begin{equation}\label{N}
0\rightarrow L^{-1}\rightarrow N\rightarrow L^{-1}\rightarrow 0\,,
\end{equation} which fits into a commutative diagram
\[\xymatrix{0\ar[r] &L^{-1}\ar[r]\ar@{=}[d]&N\ar[r]\ar[d]&L^{-1} \ar[r]\ar[d]^{\times u}&0\,\hphantom{.}\\
0\ar[r] &L^{-1}\ar[r]&  \O^2 \ar[r]& L \ar[r]&0\, ;
}\]this means that the extension (\ref{N}) is the pull-back by $\times u:L^{-1}\rightarrow L$ of the Koszul extension (\ref{K}).

Now since $E$ is the cokernel of the map $L^{-1}\rightarrow L\oplus N$ induced by $\alpha _0$, we have a commutative diagram 
\[\xymatrix{0\ar[r] &L^{-1}\ar[r]\ar[d]^{\times u}&N\ar[r]\ar[d]&L^{-1} \ar[r]\ar@{=}[d]&0\\
0\ar[r] &L\ar[r]&  E \ar[r]& L^{-1} \ar[r]&0
}\]so that the extension $L\rightarrow E\rightarrow L^{-1}$ is the push-forward by $\times u$ of  (\ref{N}). This implies the Lemma.\qed
\end{proof}

Unfortunately it seems difficult in general to decide whether the extension $L\rightarrow E\rightarrow L^{-1}$ nontrivial. Here is a case where we can conclude:

\begin{prop}\label{theta}
Assume that $C$ is non-hyperelliptic. Let $L$ be a globally generated line bundle on $C$ such that $L^2\cong K_C$. Let $0\rightarrow L\rightarrow E\rightarrow L^{-1}\rightarrow 0$ be the unique nontrivial extension of $L^{-1}$ by $L$. Then $E$ is indecomposable, and is a limit of $\O_C^2\,$.
\end{prop}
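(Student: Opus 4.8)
The plan is to treat the two assertions separately: indecomposability is essentially immediate from Lemma \ref{split}, while the real content is the limit statement, which I would obtain by combining the Str\o mme construction of the preceding Lemma with Noether's theorem.

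First, note that since $C$ is non-hyperelliptic we have $g\geq 3$, so $\deg L=g-1>0$ (because $2\deg L=\deg K_C=2g-2$), and $H^1(L^2)=H^1(K_C)\cong\C$ by Serre duality; this is what makes the nontrivial extension unique up to scalar, so that $E$ is well defined. As the defining extension is nontrivial, the implication (ii)$\Rightarrow$(i) of Lemma \ref{split} gives at once that $E$ is indecomposable. For the limit statement I would start from the fact that $L$, being globally generated of positive degree, satisfies $h^0(L)\geq 2$ and admits a base-point-free pencil $\langle s,t\rangle\subset H^0(L)$, producing the Koszul extension $(\ref{K})$ with class $e\in H^1(L^{-2})$. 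Here $e\neq 0$, since a splitting would realize the positive-degree bundle $L$ as a direct summand of the semistable bundle $\O_C^2$, which is impossible. For any $u\in H^0(L^2)=H^0(K_C)$, the preceding Lemma then produces a limit of $\O_C^2$ sitting in an extension of $L^{-1}$ by $L$ with class $u^2e\in H^1(L^2)=H^1(K_C)$. As this group is one-dimensional, it suffices to exhibit a single $u$ with $u^2e\neq 0$: the corresponding bundle then realizes a nonzero multiple of the unique nontrivial class, hence is isomorphic to $E$, so $E$ is a limit of $\O_C^2$.

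The heart of the matter is therefore to find $u$ with $u^2e\neq 0$. Via Serre duality, realized as cup product followed by the trace, the pairing $H^1(L^{-2})\times H^0(K_C\otimes L^2)\to\C$ identifies the nonzero class $e$ with a nonzero linear functional on $H^0(K_C\otimes L^2)=H^0(K_C^2)$; and since cup product is compatible with multiplication of sections, one has $u^2e=0$ in $H^1(K_C)=\C$ if and only if $\langle e,u^2\rangle=0$. Thus I must show that the functional $e$ does not vanish on every square $u^2$ with $u\in H^0(K_C)$. The squares span the image of the multiplication map $\Sym^2 H^0(K_C)\to H^0(K_C^2)$, because $uv=\tfrac{1}{4}\bigl((u+v)^2-(u-v)^2\bigr)$, and this map is \emph{surjective} by Noether's theorem, precisely because $C$ is non-hyperelliptic. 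If $e$ vanished on all squares it would vanish on their span, namely all of $H^0(K_C^2)$, contradicting $e\neq 0$; hence some $u$ gives $u^2e\neq 0$, completing the proof.

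I expect the main obstacle to be the middle step: correctly translating, through Serre duality, the extension class $u^2e$ into the evaluation $\langle e,u^2\rangle$ of a fixed nonzero functional on the squares, so that the whole question reduces to the surjectivity of $\Sym^2 H^0(K_C)\to H^0(K_C^2)$. Everything then rests on Noether's theorem, which is exactly where the non-hyperelliptic hypothesis is used, in pleasing contrast with the hyperelliptic case treated earlier, where no indecomposable limit exists.
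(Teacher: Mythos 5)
Your proposal is correct and follows essentially the same route as the paper's proof: the Str\o mme construction from the preceding Lemma, reduction to finding $u\in H^0(K_C)$ with $u^2e\neq 0$, and then polarization plus Noether's surjectivity of $\Sym^2 H^0(K_C)\to H^0(K_C^2)$ combined with the perfectness of the Serre duality pairing to rule out $e$ killing all squares. Your explicit checks that $\deg L=g-1>0$ and that the Koszul class $e$ is nonzero are details the paper leaves implicit, but the argument is the same.
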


\begin{proof} We choose   $s,t$ in $H^0(L)$ without common zero, and use the previous construction.  It suffices to prove that we can choose $u\in H^0(K_C)$ so that $u^2e\neq 0$: since $H^1(K_C)\cong\C$, the vector bundle $E$ will be the unique  nontrivial extension of $L^{-1}$ by $L$, and indecomposable by Lemma \ref{split}.
 
Suppose that $u^2e=0$ for all $u$ in $H^0(K_C)$; by bilinearity this implies $uve=0$ for all $u,v$ in $H^0(K_C)$. Since $C$ is not hyperelliptic, the multiplication map
$\mathsf{S}^2H^0(K_C)\rightarrow H^0(K_C^2)$ is surjective, so we have $we=0$ for all $w\in H^0(K^2)$. But the pairing
\[H^1(K_C^{-1})\otimes H^0(K_C^2)\rightarrow H^1(K_C)\cong\C\]is perfect by Serre duality, hence our hypothesis implies $e=0$, a contradiction. \qed
\end{proof}

\begin{rem}{\rm
In the moduli space $\mathcal{M}_g$ of curves of genus $g\geq 3$, the curves $C$ admitting a line bundle $L$ with $L^2\cong K_C$ and $h^0(L)$ even $\geq 2$ form an irreducible divisor \cite{T2}; for a general curve $C$ in this divisor, the line bundle $L$ is unique, globally generated, and satisfies $h^0(L)=2$ \cite{T1}.
Thus  Proposition \ref{theta} provides for $g\geq 4$ a  codimension 1 family of curves in $\mathcal{M}_g$ admitting an indecomposable vector bundle limit of $\O_C^2\,$.}
\end{rem}

 \begin{rem}{\rm
Let $\pi :C\rightarrow B$ be a finite morphism of smooth projective curves. If $E$ is a vector bundle limit of $\O_B^2\,$, then clearly $\pi ^*E$ is  a limit of $\O_C^2\,$. Now if $E$ is  indecomposable, $\pi ^*E$ is also  indecomposable. Consider indeed the nontrivial extension $0\rightarrow L\rightarrow E\rightarrow L^{-1}\rightarrow 0$ (Remark \ref{prop});  by Lemma \ref{split} it suffices to show that the class  $e\in H^1(B,L^2)$ of this extension remains nonzero in $H^1(C,\pi ^*L^2)$. But the
pull-back homomorphism $\pi ^*:H^1(B,L^2) \rightarrow H^1(C,\pi ^*L^2)$ can be identified with the homomorphism $H^1(B,L^2) \rightarrow H^1(B,\pi _*\pi ^*L^2)$ deduced from the linear map $L^2\rightarrow \pi _*\pi ^*L^2$, and the latter is an isomorphism onto a direct factor; hence $\pi ^*$ is injective and $\pi ^*e\neq 0$, so $E$ is indecomposable.

\smallskip	
Thus any curve dominating one of the curves considered in Propositions \ref{proposition4} and \ref{theta}   carries an indecomposable vector bundle which is a limit of $\O_C^2\,$.}
\end{rem}

\providecommand{\bysame}{\leavevmode\hbox to3em{\hrulefill}\thinspace}
%
%

\bibliographystyle{amsalpha}
\bibliographymark{References}
\def\cprime{$'$}

\end{document}